
\documentclass[10pt]{article}
\usepackage{amsfonts}

\usepackage{graphicx}
\usepackage{subcaption}

\usepackage[utf8]{inputenc}
\usepackage[top=30mm, left=30mm, right=30mm, bottom=30mm]{geometry}
\usepackage{amsmath,amssymb,amsthm}
\usepackage{dsfont}
\usepackage{color}
\usepackage{lineno,hyperref}
\usepackage{yfonts}
\usepackage{appendix}
\usepackage{babel}

\setcounter{MaxMatrixCols}{10}

\newtheorem{theorem}{Theorem}[section]
\newtheorem{lem}[theorem]{Lemma}

\newtheorem{prop}[theorem]{Proposition}
\newtheorem{rem}[theorem]{Remark}
\theoremstyle{definition}
\newtheorem{dfn}[theorem]{Definition}
\numberwithin{equation}{section}
\bibliographystyle{plain}

\DeclareMathOperator*{\supp}{supp}

\newcommand*{\Bscr}
{\mathcal B}

\newcommand*{\Fscr}
{\mathcal F}

\newcommand*{\R}
{\mathbb{R}}

\definecolor{orange}{rgb}{1.0, 0.55, 0.0}

\newcommand{\footremember}[2]{%
	\footnote{#2}
	\newcounter{#1}
	\setcounter{#1}{\value{footnote}}%
}

\begin{document}
	
\title{Average dissipation for stochastic transport equations with Lévy noise}
	\author{
		Franco Flandoli \footnote{Faculty of Sciences, Scuola Normale Superiore Pisa, Italy. E-mail: franco.flandoli@sns.it}
		\and Andrea Papini \footnote{Department of Mathematical Sciences,
			Chalmers University of Technology \& University of Gothenburg,
			Gothenburg, Sweden. E-mail: andreapa@chalmers.se}
		\and Marco Rehmeier\footremember{alley}{Faculty of Sciences, Scuola Normale Superiore Pisa, Italy} \footnote{Faculty of Mathematics, Bielefeld University, Bielefeld, Germany. E-mail: mrehmeier@math.uni-bielefeld.de}%
	}
	\date{}
	\maketitle

\begin{abstract}
We show that, in one spatial and arbitrary jump dimension, the averaged solution of a Marcus-type SPDE with pure jump Lévy transport noise satisfies a dissipative deterministic equation involving a fractional Laplace-type operator. To this end, we identify the correct associated Lévy measure for the driving noise. We consider this a first step in the direction of a non-local version of enhanced dissipation, a phenomenon recently proven to occur for Brownian transport noise and the associated local parabolic PDE by the first author. Moreover, we present numerical simulations, supporting the fact that dissipation occurs for the averaged solution, with a behavior akin to the diffusion due to a fractional Laplacian, but not in a pathwise sense.
\end{abstract}
\textbf{Keywords:} Marcus canonical integral; stochastic PDEs; Lévy process; transport equation; dissipation
\\
\textbf{MSC2020 subject classifications:} 60H15; 76F55; 76F25; 35Q49; 35K10; 60J76

\section{Introduction}

Stochastic transport (and advection, not discussed in this paper) attracts
more and more attention for its potentialities to describe small scale
turbulence in several models and applications, see for instance the volumes 
\cite{FL23,FL23-book,FMP24} or the application to
raindrop formation \cite{FHP23} and turbulence in pipes \cite{CMT23}%
, among many others. Small scale turbulence is described by stochastic
processes, space-dependent, either given a priori or inferred from data.
Most models deal with white noise or Ornstein--Uhlenbeck processes, the first
basic paradigms for any investigation of this kind. However, turbulent
signals may be more complex. Two classes of processes seem to be the first
ones to be considered after Gaussian noise: Fractional Gaussian noise, and $%
\alpha $-stable processes. The first one has been considered in \cite%
{FR23}, reporting some preliminary results. In this paper we give some
preliminary results on $\alpha $-stable processes, which seem to be the first of its kind. We also include a list of questions for future work in this direction.

The property of turbulent fluids we want to emphasize is the additional
dissipation produced by turbulent eddies. In the white noise case this has
been widely investigated, see for instance \cite{FL23-book} and the references
therein. Here, we consider this property in the case of $\alpha $-stable transport
noise. The most interesting case is undoubtedly when this transport noise
models the small scales acting on the large ones in nonlinear models, but
such a case is still too difficult in the $\alpha $-stable case. We limit
ourselves to linear transport of a scalar quantity, for instance heat,
by an $\alpha $-stable noise. 
Even in this simplified setting, many difficult technical questions emerge.
The first one is which notion of integral should be used. In the
Gaussian white noise case the basic rule is to choose a noise providing the
correct invariance (conservation) properties. This leads to Stratonovich
noise. In the $\alpha $-stable case the same invariance is given by the
so-called Markus noise. This type of stochastic integral has already been developed in the
literature, both in the finite- and infinite-dimensional case \cite{M78,CP14,HP23,Applebaum09_Levy-proc}. Its main advantage is that it preserves the ordinary rules of calculus for general Lévy processes as integrators. In this sense, the Marcus integral can be considered a natural extension of the Stratonovich integral. Indeed, for diffusion integrators, both integral notions coincide. We briefly review its definition in Section \ref{sect:M-SPDE}.

Given the model, the noise and the meaning of stochastic calculus, we limit
ourselves here to investigate the following question: whether the expected
value of the solution is dissipated, and whether this expected value satisfies
a closed equation, similarly to the Gaussian noise case. We give affirmative answers to  these questions: the expected value of the solution to the Marcus-type SPDE \eqref{M-SPDE}, for a suitable choice of an essentially $\alpha$-stable symmetric Lévy measure, satisfies the second-order parabolic deterministic equation \eqref{eq:det-eq}, see Proposition \ref{prop}. The precise shape of the operator in \eqref{eq:det-eq} depends, of course, on the chosen Lévy measure for the pure jump Lévy process $Z$ from \eqref{M-SPDE}. With the aforementioned choice of an $\alpha$-stable symmetric Lévy measure, the resulting operator is close (but not identical) to the $\alpha$-fractional Laplacian.

In order to support our claims, we include  numerical simulations, which underline the dissipative character of the expected value of the solution to \eqref{M-SPDE}. We limit ourselves to dimension $d=1$, in which an explicit solution can be computed via the method of characteristics. 
We simulate the evolution of the averaged solution profile, obtained with Monte Carlo method, showing a decay in both time and space, as expected from our theoretical results. More so, starting with a compact supported initial condition, we analyze, in Figures \ref{da_s05}, \ref{regsol_s05} and \ref{log_s05}, the decay in time in the origin $x=0$, obtaining an asymptotic behavior $\sim \beta(\sigma, m, \alpha)t^{-1/\alpha}$, with $\beta$ depending on the velocity field norm, the dimension of the Lévy process, and the parameter $\alpha$ of the Lévy measure $\nu$. This decay corresponds to the one of nonlocal PDEs. We remark that the pathwise solution profile shows no dissipativity when $\sigma$ is constant and $d=1$. It will be interesting to devote future work to the question whether pathwise dissipative behavior can be observed as well, for instance by improving the mixing property of the velocity field.

The organization of this paper is as follows. In Section \ref{sect:M-SPDE}, we introduce our model and recall the notion and basic properties of Marcus stochastic integral equations. In Section \ref{AvEnDis}, we state our theoretical results, see in particular Proposition \ref{prop}. We present and discuss numerical simulations in Section \ref{sect:num} and, finally, pose some open questions for future research in Section \ref{sect:open-questions}.

\section{Stochastic transport equations of Marcus-type}\label{sect:M-SPDE}

We consider the following transport Marcus-type SPDE on $\R_+\times \R%
^d$ 
\begin{equation}  \label{M-SPDE}
du(t,x) = (\sigma(x)\nabla u(t,x)) \diamond dZ_t,\quad u(0,x) = u_0(x),
\end{equation}
where $\sigma: \R^d \to \R^{d\times m}$ and $u_0: \R^d \to \R$. $Z$ is an $m$%
-dimensional pure jump Lévy process on a filtered probability space $%
(\Omega, \Fscr, (\Fscr_t)_{t\geq 0}, \mathbb{P})$, 
\begin{equation*}
Z_t = \int_0^t \int_{\overline{B_1(0)}}z\tilde{N}(dz,ds) + \int_0^t \int_{%
\overline{B_1(0)}^c} z N(dz,ds), \quad t \geq 0,
\end{equation*}
with Poisson random measure $N$, Lévy measure $\nu$ (i.e. $\nu
$ is a Borel probability measure on $\R^m$ with $\nu(\{0\}) = 0$ and $\int_{%
\R^m}\min(1,z^2) d\nu(z) < \infty$), and $\tilde{N}(dz,dt)= N(dz,dt) - %
\mathds{1}_{\overline{B_1(0)}}\nu(dz)dt$. Here $B_1(0)$ denotes the Euclidean ball in $\R^m$ with radius $1$ centered at $0$, and $\overline{B_1(0)}$ its closure. We make specific choices
for $\sigma$ and $\nu$ below. The symbol $\diamond$ denotes the Marcus
stochastic integral, i.e. \eqref{M-SPDE} is understood in the following
integral sense: 
\begin{align}  \label{M-SPDE-sol-eq}
u(t,x) = u_0(x) &+ \int_0^t \int_{\overline{B_1(0)}}e^{\sigma
z}u(s-,x)-u(s-,x)\,\tilde{N}(dz,ds) \\
& + \int_0^t \int_{\R^m\backslash \overline{B_1(0)}}e^{\sigma
z}u(s-,x)-u(s-,x)\,N(dz,ds)  \notag \\
& +\int_0^t \int_{\overline{B_1(0)}} e^{\sigma z}u(s-,x)-u(s-,x) - \nabla
u(s-,x)\cdot (\sigma(x)z) \,d\nu(z)ds,\quad (t,x)\in \R_+\times \R^d,  \notag
\end{align}
where for $z \in \R^m$ and $f: \R^d \to \R$, $e^{\sigma z}f$ denotes the
solution $g$ of 
\begin{align}  \label{aux-PDE}
\partial_t g(t,x) = \nabla g(t,x)\cdot (\sigma(x) z), \quad g(0,x) = f(x),
\end{align}
evaluated at $t=1$. Here $s-$ denotes the left limit of $s\in \R$. When $\sigma$ and $f$ are sufficiently regular, the
solution to this first-order linear transport PDE is unique and given by $%
g(t,x) = f(\phi_{t,0}(x))$, where $(t,x)\mapsto \phi_{t,0}(x)$ is the
inverse of the unique solution flow $(t,x)\mapsto \phi_{0,t}(x)$ for the ODE 
\begin{equation*}
\partial_t\phi_{0,t}(x) = -\sigma(\phi_{0,t}(x))z,\quad \phi_{0,0}(x) = x
\end{equation*}
on $\R\times \R^d$. We recall the following definition and result from 
\cite{HP23}.

\begin{dfn}\label{def:sol-M-SPDE}
An $(\Fscr_t)$-adapted random field $u:\R_+\times \R^d\times \Omega \to \R$
is a solution to \eqref{M-SPDE}, if it is a càdlàg $C^2$%
-semimartingale and \eqref{M-SPDE-sol-eq} is satisfied for $\mathbb{P}$-a.e. 
$\omega \in \Omega$.
\end{dfn}

\begin{prop}\label{prop:well-posedness-SPDE}
If $u_0 \in C^2_b(\R^d)$ and $\sigma\in C^4_b(\R^d,\R^{d\times m})$, then
there is a unique solution to \eqref{M-SPDE}, and it is given by  
\begin{equation*}
u(t,x) = u_0(\varphi_{t,0}(x)),
\end{equation*}
where $(t,x)\mapsto \varphi_{t,0}(x)$ denotes the inverse of the stochastic
flow of the Marcus-SDE  
\begin{equation*}
\varphi_{0,t}(x) = x - \int_0^t \sigma(\varphi_{0,s-}(x))\diamond dZ_t, \quad
t\geq 0, x \in \R^d.
\end{equation*}
\end{prop}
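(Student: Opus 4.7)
The plan is to use a stochastic method of characteristics, which is available here because the Marcus integral is designed precisely so that the ordinary chain rule holds for its driving processes. The candidate solution is $u(t,x):=u_0(\varphi_{t,0}(x))$, and both existence and uniqueness will follow from a Marcus--It\^o / Kunita-type formula applied to compositions with the stochastic flow $\varphi_{0,\cdot}$ and its inverse.

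First I would invoke the flow theory for Marcus SDEs developed in \cite{HP23} to assert that, under $\sigma \in C^4_b(\R^d,\R^{d\times m})$, the Marcus SDE for $\varphi_{0,t}$ admits a stochastic flow of $C^2$-diffeomorphisms on $\R^d$, together with its $C^2$-regular inverse $\varphi_{t,0}$. The loss of two derivatives from $\sigma \in C^4_b$ to a $C^2$-regular flow is standard in Kunita-type linearization arguments. This guarantees that $u(t,x) := u_0(\varphi_{t,0}(x))$ is a c\`adl\`ag $C^2$-semimartingale random field in the sense of Definition \ref{def:sol-M-SPDE}, so the regularity required to test it against \eqref{M-SPDE-sol-eq} is met.

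For existence, I would verify the integral equation \eqref{M-SPDE-sol-eq} directly. The jump structure matches by construction: at a jump of $Z$ of size $z$, the forward flow $\varphi_{0,t-}(y)$ jumps to the time-$1$ value of the ODE $\dot\xi = -\sigma(\xi)z$ started at $\varphi_{0,t-}(y)$, hence the inverse flow $\varphi_{t-,0}(x)$ jumps by precomposition with the inverse of that time-$1$ map. Comparing with the definition of $e^{\sigma z}f$ read off from \eqref{aux-PDE}, one obtains $u(t,x)-u(t-,x) = e^{\sigma z}u(t-,\cdot)(x) - u(t-,x)$, which is exactly the integrand appearing in both jump terms of \eqref{M-SPDE-sol-eq}. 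The compensator term against $d\nu(z)\,ds$ then arises from the Marcus--It\^o formula for the composition $u_0 \circ \varphi_{t,0}$: the linear piece $\nabla u(s-,x)\cdot(\sigma(x)z)$ is precisely the drift subtracted when replacing $N$ by $\tilde N$ on $\overline{B_1(0)}$, and the nonlinear remainder $e^{\sigma z}u - u - \nabla u\cdot(\sigma z)$ is the standard second-order-type compensation from small jumps in the Marcus formalism.

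For uniqueness, given any other solution $v$ to \eqref{M-SPDE-sol-eq}, I would apply the Marcus chain rule to $F(t,x):=v(t,\varphi_{0,t}(x))$, using simultaneously the Marcus SDE for $\varphi_{0,t}$ and the Marcus SPDE satisfied by $v$. The jumps of $v$ and of $\varphi_{0,t}$ combine (via the same precise matching between $e^{\sigma z}$ and the ODE flow) so that $F$ has no jumps, and the compensator terms cancel exactly, leaving $F(t,x) \equiv F(0,x) = u_0(x)$. Substituting $y = \varphi_{0,t}(x)$, i.e., $x=\varphi_{t,0}(y)$, then forces $v(t,y)=u_0(\varphi_{t,0}(y))$ and gives uniqueness. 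The main obstacle I expect lies precisely in this last step: the careful bookkeeping of the Marcus--It\^o formula for a \emph{composition} of two $C^2$-semimartingale random fields driven by the same pure-jump L\'evy process, handling together the small-jump compensator, the unbounded-jump contribution, and the interpolating ODE flows encoded by the Marcus correction. Once the appropriate version of this chain rule is extracted from \cite{HP23} (or adapted from \cite{Applebaum09_Levy-proc,M78,CP14}), the remaining calculation is essentially algebraic.
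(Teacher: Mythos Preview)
The paper does not prove this proposition at all: it is introduced with the sentence ``We recall the following definition and result from \cite{HP23}'', so the result is simply cited from the literature. Your proposal, by contrast, sketches an actual proof via the stochastic method of characteristics and the Marcus chain rule. The outline you give --- establishing the $C^2$ flow from the $C^4_b$ hypothesis, verifying the jump structure of $u_0\circ\varphi_{t,0}$ against the integrands of \eqref{M-SPDE-sol-eq}, and proving uniqueness by showing that $v(t,\varphi_{0,t}(x))$ is constant --- is the natural strategy and is presumably close to what \cite{HP23} does internally. So your approach is not wrong; it simply goes well beyond what the paper itself attempts, and the honest comparison is that there is nothing in the paper to compare against.
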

The definition of solution to this SDE is similar to the infinite-dimensional case, precisely it is given by
\begin{align*}
\varphi_{0,t}(x) = x &- \int_0^t \int_{\overline{B_1(0)}}\Psi_z(\varphi_{0,s-}(x)) - \varphi_{0,s-}(x)\,d\tilde{N}(dz,ds)
\\& - \int_0^t \int_{\R^m\backslash\overline{B_1(0)}}\Psi_z(\varphi_{0,s-}(x)) - \varphi_{0,s-}(x)\,dN(dz,ds)
\\& - \int_0^t \int_{\overline{B_1(0)}}\Psi_z(\varphi_{0,s-}(x)) - \varphi_{0,s-}(x) - z \sigma(\varphi_{0,s-}(x))\,\nu(z)ds,
\end{align*}
where $\Psi_z(y)$ denotes the solution to
$$\partial_t f(t) = z\sigma(f(t)), \quad t\in \R,\quad \quad f(0) = y \in \R^d,$$
evaluated at $t=1$, see \cite{M78} and \cite[Ch.4,6]{Applebaum09_Levy-proc}.

\subsection{Special cases}

We are specifically interested in the case $d=1$, $\sigma(x) = \sigma \in \R%
^m$ constant and $\nu = \frac{C}{|z|^{m+\alpha}}dz$, where $C$ is either
a constant depending on $m$ and $\alpha \in (0,2)$, or a function of $z$. In
this case, the solution to \eqref{aux-PDE} is given by $g(t,x) =
f(x+\sigma\cdot zt)$, and the last integral term in %
\eqref{M-SPDE-sol-eq} simplifies to 
\begin{equation*}
\int_0^t \int_{\overline{B_1(0)}}u(s-,x+\sigma \cdot z)-u(s-,x)-\nabla
u(s-,x)\sigma z\,d\nu(z)ds.
\end{equation*}
Moreover, in this case we have $\varphi_{0,t}(x) = x-\sigma \cdot Z_t$.
Since for any choice of $\sigma$ and $\nu$ both stochastic integrals from %
\eqref{M-SPDE-sol-eq} are martingales, taking expectation yields 
\begin{equation}  \label{exp-int}
\mathbb{E}[u(t,x)] - \mathbb{E}[u_0(x)]= \mathbb{E}\bigg[\int_0^t \int_{\overline{B_1(0)}%
}u(s-,x+\sigma \cdot z)-u(s-,x)-\nabla u(s-,x)\sigma z\,d\nu(z)ds\bigg],
\end{equation}
where we write $\mathbb{E}[X] = \int_{\Omega}X \,d\mathbb{P}$ for a random variable $X:\Omega \to \R$, provided the integral is defined.
Also note that for $d=1$, every divergence-free vector field is constant.

\section{Averaged enhanced dissipation}\label{AvEnDis}

Let $d=1$, $\sigma$ be constant, and $\nu$ have a radially symmetric density
(for instance, the classical symmetric $\alpha$-stable density $\frac{1}{|z|^{m+\alpha}}$), and set $\theta := |\sigma|$. Then, due
to the radial symmetry, the RHS of \eqref{exp-int} without expectation, i.e.
for each fixed $\omega \in \Omega$, equals 
\begin{align}  \label{int-rad-symm}
&\int_0^t \int_{\overline{B_1(0)}}u(s-,x+\theta z_1)-u(s-,x)-\nabla u(s-,x)
\theta z_1\,d\nu(z)ds,
\end{align}
where we denote by $z_i$ the $i$-th component of $z= (z_1,\dots,z_m)\in \R^m$.
In order to further calculate this integral, we need the following lemma. Below, we denote by $\pi_1$ the canonical projection $\pi_1: \R^m \to \R$, $\pi_1(z) = z_1$.

\begin{lem}\label{lem:image-measures} 
	Let $\alpha \in (0,2)$.
\begin{enumerate}
\item[(i)]  Set $\nu_\alpha := \frac{1}{|z|^{m+\alpha}}dz$. Then  
\begin{equation*}
\nu_{\alpha,1}: = \nu\circ \pi_1^{-1} = \frac{C(m,\alpha)}{|y|^{1+\alpha}}dy,
\end{equation*}
with
$$C(m,\alpha) = |\mathbb{S}^{m-2}|\int_0^\infty(1+r^2)^{-\frac{m+\alpha%
}{2}}r^{m-2}dr< \infty,$$
 where $|\mathbb{S}^{m-2}|$ denotes the surface area of $\mathbb{S}^{m-2}$, the unit sphere in $\R^{m-1}$.

\item[(ii)] Let $\nu_{\mathds{1},\alpha} := \mathds{1}_{\overline{B_1(0)}}(z)\frac{1}{%
|z|^{m+\alpha}}dz$. Then 
\begin{equation*}
\nu_{\mathds{1},\alpha,1} := \nu_{\mathds{1},\alpha}\circ \pi_1^{-1} = \frac{C(y,m,\alpha)%
}{|y|^{1+\alpha}}dy,
\end{equation*}
with
\begin{equation*}
C(y,m,\alpha) := \mathds{1}_{[-1,1]}(y) |\mathbb{S}^{m-2}|\int_0^{\frac{\sqrt{%
1-y^2}}{|y|}}(1+r^2)^{-\frac{m+\alpha}{2}}r^{m-2}dr,\quad y \in \R.
\end{equation*}
\end{enumerate}
\end{lem}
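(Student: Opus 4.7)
My approach is to compute both pushforwards directly from the definition via Fubini and polar coordinates in the last $m-1$ variables. Splitting $z = (z_1, z') \in \R \times \R^{m-1}$ and using $|z|^2 = z_1^2 + |z'|^2$, for any Borel $A \subset \R$ I would write
\begin{equation*}
(\nu_\alpha \circ \pi_1^{-1})(A) = \int_A \int_{\R^{m-1}} \frac{dz'\, dz_1}{(z_1^2 + |z'|^2)^{(m+\alpha)/2}},
\end{equation*}
then convert the inner integral to polar coordinates $z' = \rho\omega$, with $\rho \geq 0$ and $\omega \in \mathbb{S}^{m-2}$, obtaining $|\mathbb{S}^{m-2}| \int_0^\infty \rho^{m-2}(z_1^2 + \rho^2)^{-(m+\alpha)/2}\, d\rho$. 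The decisive step is then the rescaling $\rho = |z_1| r$, which pulls out the factor $|z_1|^{-(1+\alpha)}$ and leaves the dimensionless integral
\begin{equation*}
\int_0^\infty \frac{r^{m-2}}{(1+r^2)^{(m+\alpha)/2}}\, dr,
\end{equation*}
giving exactly $C(m,\alpha)/|y|^{1+\alpha}$ as claimed.

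Part (ii) follows the same blueprint verbatim, with the sole modification that the indicator $\mathds{1}_{\overline{B_1(0)}}(z)$ restricts $|z'|^2 \leq 1 - z_1^2$. This is non-trivially satisfiable only when $|z_1| \leq 1$, which accounts for the factor $\mathds{1}_{[-1,1]}(y)$; the polar radius $\rho$ then runs over $[0, \sqrt{1-z_1^2}]$, and the rescaling $\rho = |z_1| r$ transforms this range into $[0, \sqrt{1-z_1^2}/|z_1|]$, producing $C(y,m,\alpha)$ exactly as stated.

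The remaining items need only an elementary check: I would verify finiteness of $C(m,\alpha)$ (integrable at $r=0$ for $m \geq 2$ and at $r = \infty$ since $\alpha > 0$), and note that the formula implicitly assumes $m \geq 2$ so that $\mathbb{S}^{m-2}$ makes sense; the degenerate case $m=1$ is trivial, since $\pi_1$ is then the identity and $\nu_\alpha \circ \pi_1^{-1} = \nu_\alpha$ directly. I do not anticipate any genuine obstacle — the lemma is a computation, and the single load-bearing line is the change of variables $\rho = |z_1| r$ that manufactures the factor $|z_1|^{-(1+\alpha)}$ from the homogeneity of $\nu_\alpha$.
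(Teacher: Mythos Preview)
Your proposal is correct and follows essentially the same route as the paper: split $z=(z_1,z')$, apply Fubini, pass to spherical coordinates in $z'$, and use the homogeneity substitution $\rho=|z_1|r$ to extract $|z_1|^{-(1+\alpha)}$. If anything, your write-up is more explicit than the paper's (which merely says ``the proof is similar'' for part (ii)), and your side remarks on finiteness and the degenerate case $m=1$ are welcome additions.
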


\begin{proof}
	\begin{enumerate}
		\item [(i)]
	Let $A\in \Bscr(\R)$, and for $z=(z_1,\dots,z_m) \in \R^m$, write $z' = (z_2,\dots,z_m)$. Then
\begin{align*}
	\nu_{\alpha,1}(A) = \nu_\alpha(A\times \R^{m-1}) &= \int_A\int_{\R^{m-1}}(z_1^2+|z'|^2)^{-\frac{m+\alpha}{2}}dz_1dz' 
	\\&= \int_A   |z_1|^{-m-\alpha}\int_{\R^{m-1}}\bigg(1+\frac{|z'|^2}{z_1^2}\bigg)^{-\frac{m+\alpha}{2}}dz' dz_1
	\\&= \int_A |z_1|^{-1-\alpha} \int_{\R^{m-1}}(1+|z'|)^{-\frac{m+\alpha}{2}}dz'dz_1
	\\&= \int_A  C(m,\alpha)|z_1|^{-1-\alpha}dz_1,
\end{align*}
where the third equality follows from the transformation rule and the final one by calculating the inner integral by spherical coordinates.
\item[(ii)] Due to the definition of $\nu_{\mathds{1},\alpha}$, the proof is similar to the first part. 
	\end{enumerate}
\end{proof}
To convey our further procedure, first
consider \eqref{int-rad-symm}  with domain of integration  $\R^m$ instead of $\overline{B_1(0)}$, and choose $\nu = C(m,\alpha)^{-1}\nu_\alpha$. Then, by
Lemma \ref{lem:image-measures} (i), \eqref{int-rad-symm} equals 
\begin{equation*}
\int_0^t \int_\R \frac{u(s-,x+\theta y)-u(s-,x)-\nabla u(s-,x)\theta y}{%
|y|^{1+\alpha}}\,dy ds.
\end{equation*}
Inserting in \eqref{exp-int} and interchanging the expectation with the
temporal and spatial integral as well as with the gradient shows that $%
U(t,x):= \mathbb{E}[u(t,x)]$ solves 
\begin{equation}  \label{eq-U}
\partial_tU(t,x) = \mathcal{L}_{\alpha}U(t-,x),
\end{equation}
where the operator $\mathcal{L}_{\alpha}$ is defined by
\begin{equation*}
\mathcal{L}_{\alpha}f(x) := \int_{\R} \frac{f(x+\theta y)-f(x)-\nabla
f(x)\theta y}{|y|^{1+\alpha}}dy.
\end{equation*}
Note that \eqref{eq-U} is a deterministic, nonlocal second-order parabolic equation for
the expected value of $u(t,x)$, which itself solves (pathwise) \eqref{M-SPDE}. We point out the similarity of $\mathcal{L}%
_{\alpha}$ with the fractional
Laplacian $(-\Delta)^\alpha$ on $\R$,
\begin{equation*}
(-\Delta)^\alpha f(x) = \int_{\R} \frac{f(x+y)-f(x)-\nabla f(x)y \mathds{1}%
_{(-1,1)}(y)}{|y|^{1+\alpha}}dy.
\end{equation*}
In fact, for $\theta = 1$ and $\alpha >1$, $\mathcal{L}_\alpha = (-\Delta)^\alpha$, since in this case $\int_{B_1(0)^c}\frac{y}{|y|^{1+\alpha}}dy = 0$.

Now, in order to take into account the proper domain of integration $\overline{B_1(0)}$ in \eqref{int-rad-symm}, we repeat the previous lines with the choice 
\begin{equation}\label{choice-nu}
\nu
= |\mathbb{S}^{m-2}|^{-1}\nu_{\mathds{1},\alpha}.
\end{equation}
 Then, similarly to \eqref{eq-U}, we arrive at 
\begin{equation}\label{eq:det-eq}
\partial_t U(t,x) = \mathcal{L}_{\mathds{1},\alpha}U(t-,x),\quad (t,x)\in \R_+\times \R,
\end{equation}
satisfied by $U(t,x) = \mathbb{E}[u(t,x)]$, where we set 
\begin{equation*}
\mathcal{L}_{\mathds{1},\alpha}f(x) := \int_{-1}^1 f(x+\theta y)-f(x)-\nabla
f(x)\theta y \,d\nu_{\mathds{1},\alpha,1}(y) = \int_{-1}^1c(m,\alpha,y)\frac{%
f(x+\theta y)-f(x)-\nabla f(x)\theta y}{|y|^{1+\alpha}}\,dy 
\end{equation*}
with 
$$c(y,m,\alpha):= \int_0^{\frac{\sqrt{1-y^2}}{|y|}}(1+r^2)^{-\frac{%
m+\alpha}{2}}r^{m-2}dr.$$ For the finite positive weight $c(y,m,\alpha)$, we
note $c(y,m,\alpha)\xrightarrow{|y|\to 1}0$, and that $c(y,m,\alpha)$ is symmetric around $y=0$. Since $c(y,m,\alpha)$ is bounded on $(-1,1)$, by Taylor formula $\mathcal{L}_{\mathds{1},\alpha}f(x)$ is well-defined and finite for any $\alpha \in (0,2)$, $x \in \R$, and $f \in C^2(\R)$. The latter is satisfied for $x\mapsto u(t-,x)$, for every $t \geq 0$ and $\mathbb{P}$-a.e. $\omega \in \Omega$, as well as for $x\mapsto U(t-,x)$, for every $t \geq 0$. Interchanging the expectation  with the temporal and spatial integral in \eqref{int-rad-symm} is justified, since $u$ is given as in Proposition \eqref{prop:well-posedness-SPDE} and since $\varphi$ is a stochastic flow of smooth diffeomorphisms, see \cite[Thm.6.10.10]{Applebaum09_Levy-proc}. Therefore, we have arrived at the following result.

\begin{prop}\label{prop}
	Consider \eqref{M-SPDE} for $d=1$, $u_0 \in C^2_b(\R)$, $\sigma \in \R^m$ constant, $Z$ with Lévy measure $\nu_{\mathds{1},\alpha}$ as in \ref{choice-nu}, and let $u$ be the unique solution in the sense of Definition \ref{def:sol-M-SPDE}. Then, $U(t,x):= \mathbb{E}[u(t,x)]$ solves \eqref{eq:det-eq}.
\end{prop}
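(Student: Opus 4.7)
The plan is to take expectation in the integral form \eqref{M-SPDE-sol-eq} of the Marcus equation, interchange $\mathbb{E}$ with the $ds$ and $d\nu$ integrals via Fubini, and then recognize the resulting drift operator by appealing to Lemma \ref{lem:image-measures}(ii). With the choice \eqref{choice-nu}, the Lévy measure $\nu$ is supported in $\overline{B_1(0)}$, so the associated Poisson random measure $N$ has no atoms outside this ball and the second integral on the right-hand side of \eqref{M-SPDE-sol-eq} vanishes pathwise. For the compensated integral against $\tilde N$, the explicit representation $u(s-,x) = u_0(x-\sigma\cdot Z_{s-})$ from Proposition \ref{prop:well-posedness-SPDE} together with $u_0 \in C^2_b(\R)$ yields
\begin{equation*}
\big| e^{\sigma z}u(s-,x) - u(s-,x) \big| = \big| u_0(x-\sigma\cdot Z_{s-} - \sigma\cdot z) - u_0(x-\sigma\cdot Z_{s-}) \big| \leq \|\nabla u_0\|_\infty |\sigma||z|,
\end{equation*}
which, combined with $\int_{\overline{B_1(0)}} |z|^2\,d\nu(z) < \infty$, shows that the compensated integral is a true $L^2$-martingale started at $0$ and therefore has zero expectation.

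After discarding these two terms, equation \eqref{M-SPDE-sol-eq} reduces in expectation to the drift identity \eqref{exp-int}. Since $\sigma$ is constant, \eqref{aux-PDE} has the closed-form solution $g(t,x) = f(x+\sigma\cdot z\,t)$, so $e^{\sigma z}u(s-,x) = u(s-,x+\sigma\cdot z)$ and the integrand depends on $z$ only through $\sigma\cdot z$. By the radial symmetry of $\nu_{\mathds{1},\alpha}$, a rotation aligning $\sigma/\theta$ with the first basis vector leaves $\nu_{\mathds{1},\alpha}$ invariant and reduces the $z$-integral to a one-dimensional integral against the push-forward $\nu_{\mathds{1},\alpha}\circ\pi_1^{-1}$. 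Lemma \ref{lem:image-measures}(ii) identifies this push-forward explicitly as $c(y,m,\alpha)|y|^{-1-\alpha}\mathds{1}_{[-1,1]}(y)\,dy$, so after dividing by the normalization $|\mathbb{S}^{m-2}|$ from \eqref{choice-nu} the drift integrand matches precisely the definition of $\mathcal{L}_{\mathds{1},\alpha}$ applied to $U(s-,\cdot) = \mathbb{E}[u(s-,\cdot)]$.

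It remains to justify the exchange of $\mathbb{E}$ with the $ds$, $dy$ and spatial-gradient operations. For Fubini, a second-order Taylor expansion of $f = u(s-,\cdot) \in C^2_b$ around $x$ bounds the integrand appearing in $\mathcal{L}_{\mathds{1},\alpha}$ by $\tfrac{1}{2}\|u_0''\|_\infty \theta^2 y^2 c(y,m,\alpha)|y|^{-1-\alpha}$, which is integrable on $(-1,1)$ for every $\alpha \in (0,2)$ because $c(\cdot,m,\alpha)$ is bounded there. This dominating function is deterministic and independent of $\omega$, so the exchange with $\mathbb{E}$ on $[0,t]\times(-1,1)$ is immediate by dominated convergence. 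Passing $\mathbb{E}$ under $\nabla_x$ uses the regularity of the stochastic flow cited after Proposition \ref{prop:well-posedness-SPDE} together with the uniform bound $\|\nabla u(s-,\cdot)\|_\infty \leq \|u_0'\|_\infty$. Differentiating the resulting integral equation in $t$ then gives \eqref{eq:det-eq}.

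The main obstacle is not any single algebraic identity but the simultaneous bookkeeping of integrability and interchange: in particular, verifying that the compensated Poisson integral is a genuine martingale (rather than only a local one) and that all three exchanges of expectation with integration and with $\nabla_x$ are licit. Both ultimately reduce to the bound $\|u_0\|_{C^2_b} < \infty$ and the explicit form $u(t,x) = u_0(x-\sigma\cdot Z_t)$, but they should be spelled out carefully to close the argument.
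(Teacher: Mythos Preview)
Your proof is correct and follows exactly the paper's route: take expectation in \eqref{M-SPDE-sol-eq}, discard the martingale terms, reduce the $z$-integral to one variable via the radial symmetry of $\nu_{\mathds{1},\alpha}$ and Lemma~\ref{lem:image-measures}(ii), then justify the interchanges of $\mathbb{E}$ with $\int ds$, $\int dy$ and $\nabla_x$ using the $C^2_b$ bounds and the explicit flow representation. The only slip is a sign: since $\varphi_{0,t}(x)=x-\sigma\cdot Z_t$, its inverse is $\varphi_{t,0}(x)=x+\sigma\cdot Z_t$, so $u(s-,x)=u_0(x+\sigma\cdot Z_{s-})$ rather than $u_0(x-\sigma\cdot Z_{s-})$; this does not affect any of your bounds or the conclusion.
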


\begin{rem}\label{decaylap}
Note that equation \eqref{eq:det-eq} is dissipative since it involves a fractional Laplace-like operator. For such operators, there are results for the decay of solutions, yielding a link with our numerical results (see Section \ref{sect:num}) and the operator $\mathcal{L}_{\mathds{1},\alpha}$.
More precisely, consider
$$
\partial_t u(t,x)=(-\Delta)^{\alpha/2}u(t,x),\quad u(0,x)=u_0(x),\quad (t,x) \in \R_+\times \R.
$$
There exists a $C_0-$semigroup $(\mathcal{S}_\alpha(t))_{t\geq 0}$ such that $u(t,x)=\mathcal{S}_\alpha(t)*u_0(x)$. The following estimate in dimension $d=1$ was obtained in \cite{FK20}.
$$
\|u(t)\|_\infty=\|\mathcal{S}_\alpha(t)* u_0\|_{\infty}\leq t^{-\alpha/2}\|u_0\|_1,
$$
where $\|\cdot\|_\infty$ and $||\cdot||_1$ denote the usual $L^\infty$- and $L^1$-norms, respectively, thus showing the dissipative behavior of the solution. This behavior is retrieved also numerically (see below), suggesting a similar behavior for $\mathcal{L}_{\mathds{1},\alpha}$ due to is similarity with the fractional Laplacian.

So far, we were unable to prove similar results for individual paths of the solution to \eqref{M-SPDE}. In fact, the pathwise profile shows no sign of dissipativity (see Figure \ref{pwsol_s05}) in our simple special case, where $\sigma$ is constant and $d=1$. So, a conclusion on the pathwise dissipative behavior and an Ito-Stratonovich diffusion limit-type result is yet to be reached.
A main reason for this appears to be the absence of any mixing property of divergence-free vector fields in dimension $d=1$, yielding the pathwise profile a simple translation in time. We expect that in dimension $d>1$ and for suitable vector fields as in \cite{FGL22,FGL21}, a pathwise dissipative behavior of the solution to \eqref{M-SPDE} solution is possible.
\end{rem}

\section{Numerical results}\label{sect:num}
\begin{figure}[b!]
	\includegraphics[width=\textwidth]{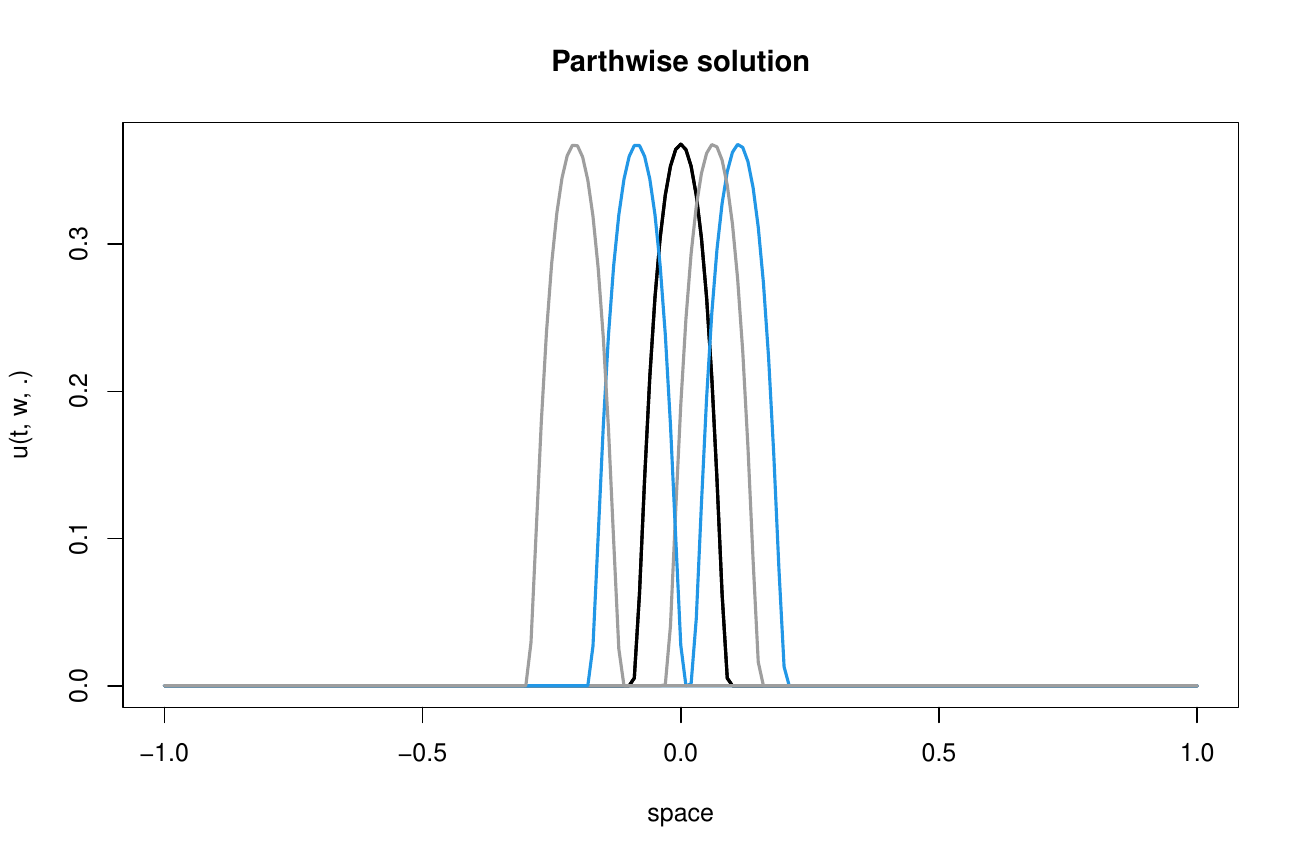}
	\caption{Solution trajectory at several times}
	\label{pwsol_s05}
\end{figure}

In order to support the results of the previous section, here we present numerical simulations of the SPDE \eqref{M-SPDE} in our special case, thereby underlining the dissipative character of the expected value of its solution.
As in Section \ref{AvEnDis}, we limit ourselves to dimension $d=1$, we consider a constant vector field $\sigma\in \mathbb{R}^m$, and we choose the Lévy measure of the driving pure jump Lévy process to be $\nu_{\mathds{1},\alpha}$, as defined in Lemma \ref{lem:image-measures} (ii). The jump dimension $m\in \mathbb{N}$ of the Lévy process is arbitrary.
Under these assumptions, an explicit solution to \eqref{M-SPDE} can be computed via the method of characteristics, which can be exploited for simple numerical simulations, namely
\begin{align*}
	u(t,x;\omega):=u_0(x+\sigma\cdot Z_t(\omega)).
\end{align*}
For all simulations below, we fixed a smooth bump function
$u_0(x):=\exp(-\frac{0.01}{0.01-\min(0.01,x^2)})$ as initial condition.
To simulate our $\alpha$-stable Lévy process trajectory, we need to take into account the fact that our choice of Lévy measure $\nu_{\mathds{1},\alpha}$ neglects large jumps. To this end, using the independent increment and self-similarity property, we compute the next step of the trajectory by cutting away jumps larger than one and generating a new realization until the jump size is sufficiently small. To implement the $\alpha-$stable distribution we used the $R$-package $Stabledif$ \cite{Stabledist}.
The time step is $dt=10^{-4}$, in the range $[0,2]$, while the space domain is selected to be the interval $[-1,1]$. Note that the equation \ref{M-SPDE} is posed on $\R^d$, i.e. there is a slight discrepancy between the equation and our numerical simulations
 that needs to be taken into account when we interpreting the results. The support of $\supp u_0$ is sufficiently small so that almost no mass escapes the system throughout the time evolution. The space discretization size is selected at $dx=10^{-3}$.

\begin{figure}[h!]
	\centering
	\begin{subfigure}[t]{0.7\textwidth}
		\centering
		\includegraphics[width=\textwidth]{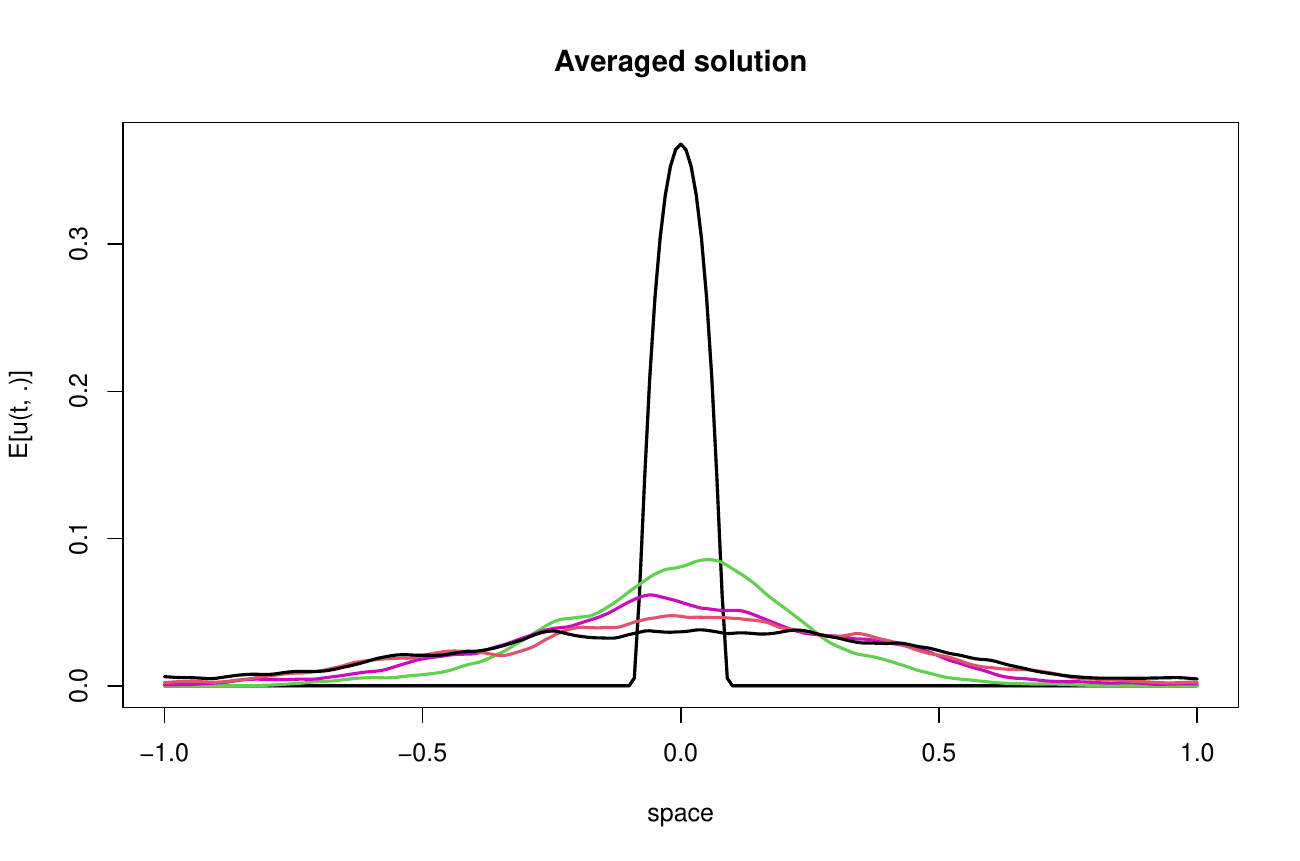}
		
		\caption{Decay in space of averaged solution at several times}\label{awsol_s05}
	\end{subfigure}%
	\hfill
	\begin{subfigure}[t]{0.7\textwidth}
		\centering
		\includegraphics[width=\textwidth]{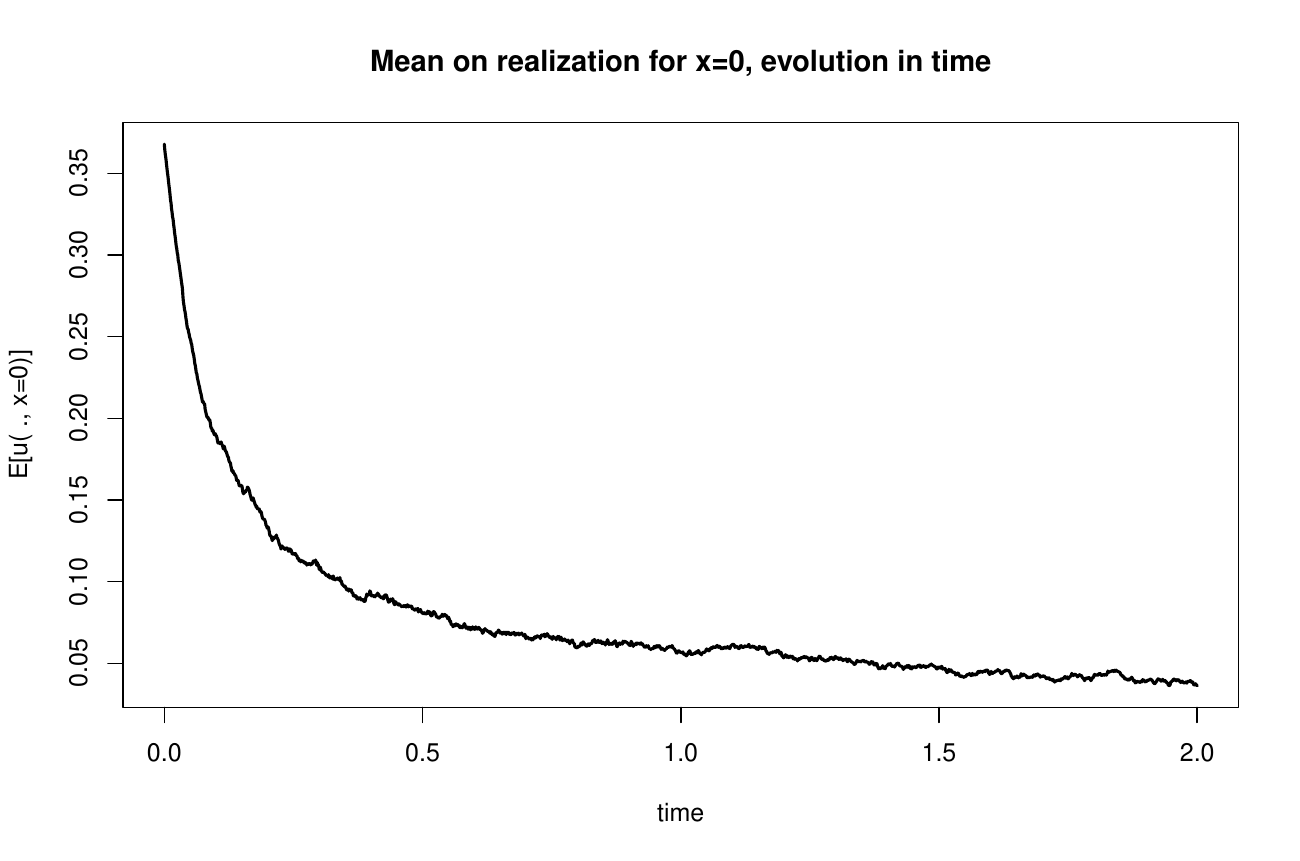}
		\caption{Decay in time of averaged solution at $x=0$}
		\label{da_s05}
	\end{subfigure}
	\caption{Averaged solution to equation \ref{M-SPDE}}
\end{figure}

In our numerical analysis, we set our parameters in the range $\theta=\|\sigma\|\in[0,1]$, $\alpha\in (0,2)\setminus \{1\}$ and $m=1,2,5,10$. Our results are qualitatively consistent, and we are here analyzing and presenting figures for $\theta=0.5,\ \alpha=1.5$ and $m=2$. In future works, we expect to give a precise quantification of the dependence on these parameters or the decay rate in time of the averaged solution. Our results, as of now, focus on the qualitative behavior in time for the averaged solution. This behavior depends only on the parameter $\alpha$ of the stable distribution.

\begin{figure}[h!]
	\centering
	\begin{subfigure}[b]{0.7\textwidth}
		\centering
		\includegraphics[width=\textwidth]{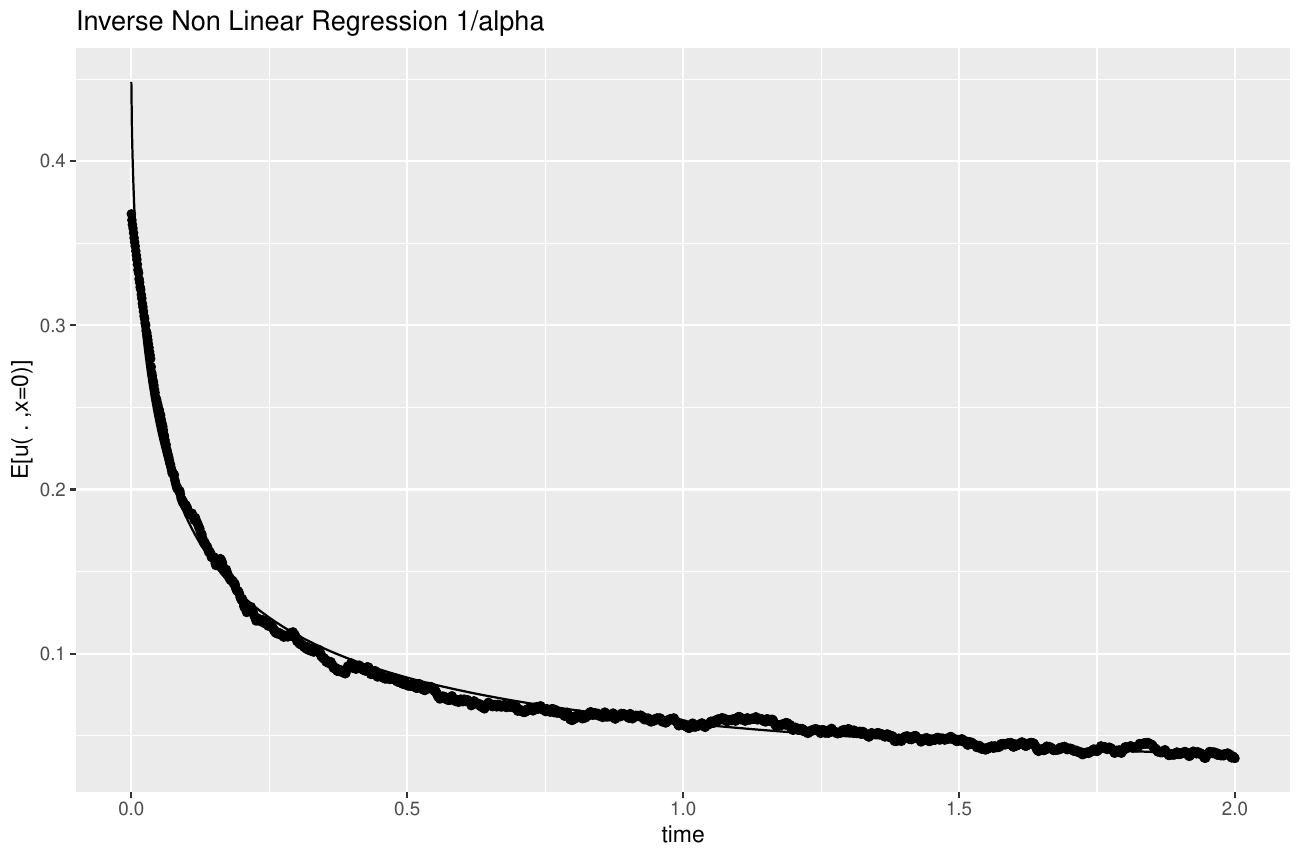}
		
		\caption{Nonlinear regression $\sim t^{-1/alpha}$, $R.E.=0.003$}\label{regsol_s05}
	\end{subfigure}%
	\hfill
	\begin{subfigure}[b]{0.7\textwidth}
		\centering
		\includegraphics[width=\textwidth]{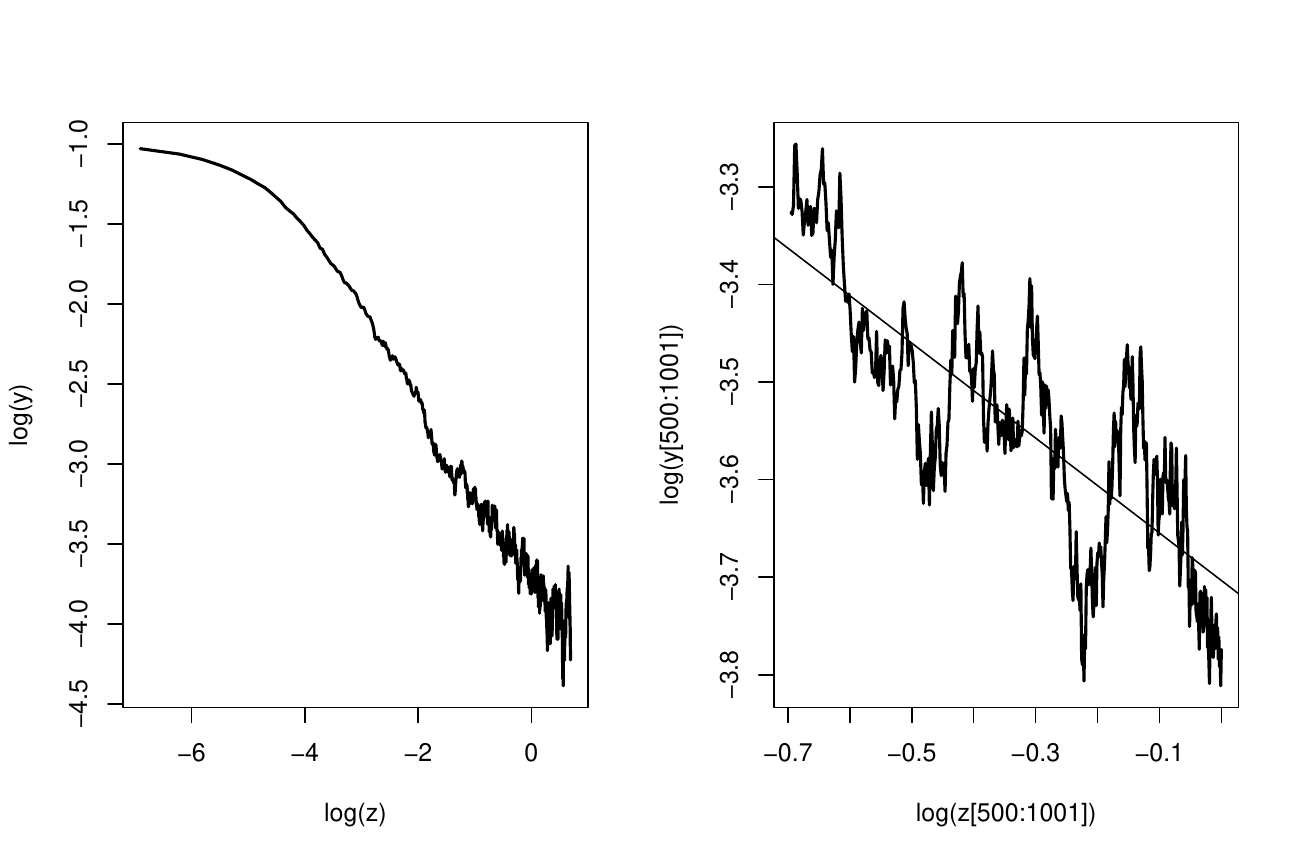}
		\caption{Log-Log Plot and tail behavior. $z:=t\ (time),\ y:=\mathbb{E}[u(t,x=0)]$.\\ Left: full decay. Right: zoom on tail for larger times.}
		\label{log_s05}
	\end{subfigure}
	\caption{Nonlinear regression on \ref{eq:det-eq}}
\end{figure}

Figure \ref{pwsol_s05} presents a solution trajectory to \eqref{M-SPDE} at different times. Precisely, the black curve represents the initial condition, while the blue ones show the solution profile at times $t=1/2,\ t=1$. Similarly, the grey curves represent the times $t=3/2,\ t=2$. As expected, the solution trajectories are translations, with no mixing property arising from the constant vector field $\sigma$, thus not showing any dissipation. The $L^2$ norm in space is preserved, and even for large times the solution preserves energy with an error of $10^{-14}$, which is only due to the space domain being finite in the simulations.
Nonetheless it is clear that the energy is preserved, as expected, due to the transport character of \eqref{M-SPDE}.

To show consistency with the theoretical results, we simulate the evolution of the averaged solution profile, obtained with Monte Carlo methods over averaging $5000$ samples, as pictured in Figures \ref{awsol_s05}, \ref{da_s05}, showing a decay in both time and space. In the first figure, analogously to the pathwise result, we present the averaged solution at different times. The black graph represents the initial condition, while the green and purple ones show the average solution at times $t=1/2,\ t=1$, respectively, and at times $t=3/2,\ t=2$ for the red and blue ones.
In this case, a diffusive behavior is present and a decay in space and time is observed. Concerning the space behavior, the profiles are still not smooth. The reason is twofold, one being the averaging procedure, the second one arising from the fractional operator obtained for the equation modeling the averaged system (i.e. \eqref{eq:det-eq}) and the corresponding small jumps. Concerning the decay, particularly care for $x=0$, in which the initial condition has its maximum, and note the decay of the averaged solution in time with a power law-like asymptotic behavior.

More so, starting with a compact supported initial condition, we show, in Figures \ref{regsol_s05} and \ref{log_s05}, the time decay in the origin $x=0$, with a nonlinear regression to estimate the asymptotic and power law-like behavior, linking it to the operator $\mathcal{L}_{\mathds{1},\alpha}$ proposed in the theoretical section.
In particular, in Figure \ref{regsol_s05}, we performed a regression using the insight of Remark \ref{decaylap} numerically to show that, with a residual error of less than $0.003$, we have a decay of the averaged solution in time of the following form:
$$
\mathbb{E}[u(t,x=0)]\sim \beta(\sigma, m, \alpha)t^{-1/\alpha},
$$
with $\beta$ depending on $\theta:=\|\sigma\|$, the dimension of the Lévy process $m\in\mathbb{N}$, and the parameter $\alpha$ of the Lévy measure $\nu$. 
Here, we have not delved into an analysis of the behavior of $\beta$, which could be theoretically examined, as discussed in Remark \ref{decaylap}. Therefore, it is crucial for future research to explore how the strength of $\beta$, and consequently the velocity field, interact with the decay of the profile.

In Figure \ref{log_s05}, we plot the log-log version of the decay in time, showing the inverse asymptotic behavior of the averaged solution profile in space and time and its concordance with the results of the theoretical section and the nonlinear regression. More so, on the right in Figure \ref{log_s05}, the tail was analyzed in the time frame $z[500:1001]:=[0.5,1]$, showing a rough behavior, but with respect to the regression line, the error is in the range $0.003-0.006$, which validates our results.

\section{Open questions}\label{sect:open-questions}
We conclude this work with a few questions for future work in this direction.
\begin{enumerate}
	\item [(i)] Can similar results be obtained for non-constant $\sigma$ (in dimension $d \geq 2$)?
	\item[(ii)] 
	Related to (i), do vector fields with suitable mixing properties lead to a  pathwise dissipation result for solutions to \eqref{M-SPDE}?
	\item[(iii)] Is it possible to obtain precisely the fractional Laplacian as the operator of the deterministic equation \eqref{eq:det-eq} instead of $\mathcal{L}_{\mathds{1},\alpha}$, and what is the corresponding Lévy measure?
	\item[(iv)] Which operators does one obtain by choosing Lévy measures in \eqref{M-SPDE} which are not $\alpha$-stable?
\end{enumerate}

\paragraph{Acknowledgements.} The research of F.F. is funded by the European Union (ERC, NoisyFluid, No. 101053472. 
A.P. is supported by the European Union (ERC, StochMan, No. 101088589).
M.R. is funded by the German Research Foundation (DFG) - Project number 517982119. The authors would also like to thank Marvin Weidner for a very valuable comment regarding Lemma \ref{lem:image-measures} and Gaia Tramonte for helpful suggestions.


\bibliographystyle{plain}
\bibliography{bib-collection}

\end{document}